\newcommand\blfootnote[1]{%
  \begingroup
  \renewcommand\thefootnote{}\footnote{#1}%
  \addtocounter{footnote}{-1}%
  \endgroup
}
\newtheorem{theorem}{Theorem}
\newtheorem{lemma}[theorem]{Lemma}
\begin{document}

\title{The robber strikes back}
\author[1]{Anthony Bonato}
\author[2]{Stephen Finbow}
\author[3]{Przemys{\l}aw Gordinowicz}
\author[4]{Ali Haidar}
\author[1]{William B.\ Kinnersley}
\author[5]{Dieter Mitsche}
\author[1]{Pawe{\l} Pra{\l}at}
\author[6]{Ladislav Stacho}
\affil[1]{Ryerson University}
\affil[2]{St.\ Francis Xavier University}
\affil[3]{Technical University of Lodz}
\affil[4]{Carleton University}
\affil[5]{University of Nice Sophia-Antipolis}
\affil[6]{Simon Fraser University}

\date{}

\maketitle

\blfootnote{The authors were supported by grants from NSERC and Ryerson University.}

\begin{abstract}
We consider the new game of Cops and Attacking Robbers, which is identical to the usual Cops and Robbers game except that if the robber moves to a vertex containing a single cop, then that cop is removed from the game. We study the minimum number of cops needed to capture a robber on a graph $G$, written $cc(G)$. We give bounds on $cc(G)$ in terms of the cop number of $G$ in the classes of bipartite graphs and diameter two, $K_{1,m}$-free graphs.
\end{abstract}

\textbf{AMS 2010 Subject Classification}: 05C57

\textbf{Keywords}: Cops and Robbers, cop number, bipartite graphs, claw-free graphs.

\section{Introduction}

\emph{Cops and Robbers} is a vertex-pursuit game played on graphs that has
been the focus of much recent attention. Throughout, we only consider finite, connected, and simple undirected graphs.
There are two players consisting of
a set of \emph{cops} and a single \emph{robber}. The game is played over a
sequence of discrete time-steps or \emph{rounds}, with the cops going first
in the first round and then playing on alternate time-steps. The cops and
robber occupy vertices, and more than one cop may occupy a vertex.
 When a player is ready to move in a round they may move to a neighbouring vertex or \emph{pass} by remaining on their own vertex. Observe that any
subset of cops may move in a given round. The cops win if after some finite
number of rounds, one of them can occupy the same vertex as the robber. This
is called a \emph{capture}. The robber wins if he can avoid capture
indefinitely. A \emph{winning strategy for the cops} is a set of rules that
if followed result in a win for the cops, and a \emph{winning strategy for the
robber} is defined analogously.

If we place a cop at each vertex, then the cops are guaranteed to win.
Therefore, the minimum number of cops required to win in a graph $G$ is a
well defined positive integer, named the \emph{cop number} of the graph $G.$
We write $c(G)$ for the cop number of a graph $G$. For example, the Petersen graph has cop number $3$.
Nowakowski and Winkler~%
\cite{nw}, and independently Quilliot~\cite{q}, considered the game with one
cop only; the introduction of the cop number came in~\cite{af}. Many papers
have now been written on cop number since these three early works; see the
book \cite{bonato} for additional references and background on the cop
number. See also the surveys \cite{bonato1,bonato2,bonato3}.

Many variants of Cops and Robbers have been studied. For example, we may allow a cop to capture the robber from a distance $k$, where $k$ is a non-negative integer \cite{bonato4}, play on edges \cite{pawel}, allow one or both players to move with different speeds or teleport, or allow the robber to be invisible. See Chapter~8 of \cite{bonato} for a non-comprehensive survey of variants of Cops and Robber.

We consider a new variant of the game of Cops and Robbers, where the robber is able to essentially strike back against the cops. We say that the robber \emph{attacks} a cop if he chooses to move to a vertex on which a cop is present and eliminates her from the game.  In the game of \emph{Cops and Attacking Robbers}, the robber may attack a cop, but cannot start the game by moving to a vertex occupied by a cop; all other rules of the game are the same as in the classic Cops and Robbers. We note that if two cops are on a vertex $u$ and the robber moves to $u$, then only one cop on $u$ is eliminated; the remaining cop then captures the robber and the game ends. We write $cc(G)$ for the minimum number of cops needed to capture the robber. Note that $cc(G)$ is the analogue of the cop number in the game of Cops and Attacking Robber; our choice of notation will be made more transparent once we state Theorem~\ref{first}. We refer to $cc(G)$ as the \emph{cc-number of} $G.$ Since placing a cop on each vertex of $G$ results in a win for the cops, the parameter $cc(G)$ is well-defined.

To illustrate that $cc(G)$ can take different values from the cop number, consider that for the cycle $C_n$ with $n$ vertices we have the following equalities (which are easily verified):
$$cc(C_n)=
\begin{cases}
1 & \mbox{if } n=3,\\
2 & \mbox{if } 4\leq n \leq 6 ,\\
3 & \mbox{else.}
\end{cases} $$

We outline some basic results and bounds for the cc-number in Section~\ref{uno}. We consider bounds on $cc(G)$ in terms of $c(G)$ in Section~\ref{duo}. In Section~\ref{tri} we give the bound of $cc(G) \le c(G)+2$ in the case that $G$ is bipartite; see Theorem~\ref{bip}. In the final section, we supply in Theorem~\ref{cf} an upper bound for $cc(G)$ for $K_{1,m}$-free, diameter two graphs.

For background on graph theory see \cite{west}. For a vertex $u$, we let $N(u)$ denote the neighbour set of $u$, and $N[u]=N(u) \cup \{u\}$ denote the closed neighbour set of $u.$ The set of vertices of distance $2$ to $u$ is denoted by $N_2(u).$ We denote by $\delta(G)$ the minimum degree in $G$. In a graph $G,$ a set $S$ of vertices is a \emph{dominating set} if every vertex not in $S$ has a neighbor in $S.$ The \emph{domination number} of $G,$ written $\gamma (G),$ is the minimum cardinality of a dominating set. The \emph{girth} of a graph is the length of the shortest cycle contained in that graph, and is $\infty$ if the graph contains no cycles.

\section{Basic results}\label{uno}

In this section we collect together some basic results for the cc-number. As the proofs are either elementary or minor variations of the analogous proofs for the cop number, they are omitted. The first result on the game of Cops and Attacking Robbers is the following theorem; note that the second inequality naturally inspires the notation $cc(G)$. We use the notation $\bar{c}(G)$ for the edge cop number, which is a variant where the cops and robber move on edges; see \cite{pawel}.
\begin{theorem}\label{first}
If $G$ is a graph, then $$c(G) \leq cc(G) \leq \min\{2 c(G),2 \bar{c}(G), \gamma(G)\}.$$
\end{theorem}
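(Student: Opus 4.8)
The plan is to prove the three upper bounds independently, together with the routine lower bound; each argument is short, in the promised ``minor variation'' style. For the lower bound, note that a robber in the classic game is simply an attacking robber who declines ever to attack, so every play of the classic game is also a legal play of Cops and Attacking Robbers; hence a cop strategy winning the attacking game with $k$ cops wins the classic game with $k$ cops, and taking $k = cc(G)$ yields $c(G) \le cc(G)$. For $cc(G) \le \gamma(G)$, take a minimum dominating set $S$ and place one cop on each vertex of $S$; since the robber may not begin on an occupied vertex, his starting vertex lies outside $S$ and therefore has a neighbour in $S$, so (the cops moving first) that neighbouring cop captures the robber in the opening round, and $\gamma(G)$ cops suffice.

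For $cc(G) \le 2c(G)$, fix a winning cop strategy for the classic game that uses $c(G)$ cops and replace each cop by a \emph{pair} of cops constrained to share a common vertex at all times and to move in lockstep, so that each pair mimics one of the original cops; the pairs simply follow the classic winning strategy step by step (their common starting vertices are the originals' starting vertices, which the robber must avoid in both games). The one new phenomenon is an attack: if the robber ever moves onto the vertex of a pair, exactly one of the two cops there is removed and the surviving cop then shares the robber's vertex, so the robber is captured. Thus moving onto a pair is fatal for the robber, which forces him to play exactly as a robber facing $c(G)$ ordinary cops --- a game the cops win --- and therefore $2c(G)$ cops suffice against the attacking robber.

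For $cc(G) \le 2\bar{c}(G)$ we run the same idea, now pairing cops so that each pair occupies the two endpoints of an edge and emulates a single cop in the edge variant of \cite{pawel}. The crucial point is that a pair resting on the endpoints $x,y$ of an edge makes \emph{both} $x$ and $y$ lethal for the robber: if the robber steps onto $x$ (or $y$), one cop of the pair is eliminated but the cop still sitting at $y$ (resp.\ $x$) crosses the edge $xy$ and captures him the next round; this is precisely the extra robustness a pair has over a lone vertex-cop. A pair can also shift from the endpoints of an edge $e=xy$ to those of an adjacent edge $f=yz$ in a single round, by moving the cop at $y$ to $z$ and the cop at $x$ to $y$. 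Hence $\bar{c}(G)$ such pairs can faithfully carry out a winning edge-cop strategy, a capturing configuration in the edge game translating into a capture (or a fatal attack) in $G$. I expect the real work here to be bookkeeping rather than ideas: one must set up the correspondence between the robber's vertex and an ``edge position'' in the edge game carefully enough that every edge-cop capture is forced to become a genuine capture in $G$, matching the exact rules of the edge game of \cite{pawel}; this is the step where I would be most careful.
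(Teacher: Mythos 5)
Your proposal is correct and takes essentially the approach the paper intends: the paper omits the proof of Theorem~\ref{first} as elementary, and your four steps (restriction of the robber's options for the lower bound, dominating-set placement for $\gamma(G)$, and the cop-doubling/pairing tricks for $2c(G)$ and $2\bar{c}(G)$, with the simulated edge-robber sitting on the last edge the real robber traversed) are exactly the standard arguments. Indeed, the key observation behind both doubling bounds is already stated in the paper's introduction: when the robber attacks a doubly-occupied vertex, only one cop is eliminated and the survivor captures him.
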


The following theorem is foundational in the theory of the cop number.

\begin{theorem}\label{girth0}\cite{af}
If  $G$ has girth at least 5, then $$c(G) \geq \delta (G).$$
\end{theorem}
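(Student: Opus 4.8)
The plan is to show that if fewer than $\delta(G)$ cops play on a graph $G$ of girth at least $5$, then the robber can evade capture forever; this yields $c(G)\ge\delta(G)$. The whole argument rests on a single consequence of the girth hypothesis: for any vertex $v$ and any vertex $c\ne v$, we have $|N(v)\cap N[c]|\le 1$. Indeed, two distinct common neighbours of $v$ and $c$ would form a $4$-cycle through $v$ and $c$; if moreover $c\in N(v)$, a common neighbour $w$ of $v$ and $c$ would form the triangle $vwc$; and the only way $c$ itself can lie in $N(v)\cap N[c]$ contributes at most the single vertex $c$. Informally, each cop ``controls'' at most one neighbour of whatever vertex the robber currently occupies, precisely because $G$ has no short cycles.

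First I would dispose of the robber's opening move by observing that $\gamma(G)\ge\delta(G)$, so that $k<\delta(G)\le\gamma(G)$ cops cannot dominate $G$. If a minimum dominating set $D$ equals $V(G)$ then $\gamma(G)=|V(G)|\ge\delta(G)+1$; otherwise fix $v\notin D$, note that its $\ge\delta(G)$ neighbours are pairwise non-adjacent and each must be dominated by $D$, and apply the displayed fact (with $c\in D$, $c\ne v$) to conclude that each vertex of $D$ dominates at most one of them, so $|D|\ge\delta(G)$. Consequently the $k$ cops leave some vertex $v_0$ with $N[v_0]$ entirely free of cops; the robber starts there and so survives the cops' first move.

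Next I would describe the robber's strategy via the invariant: at the start of each of the robber's turns the robber occupies a vertex carrying no cop. Suppose this holds with the robber at $v$. If no cop is adjacent to $v$, the robber passes; no cop can then move onto $v$, so the invariant is preserved. If some cop is adjacent to $v$, then among the $\deg(v)\ge\delta(G)>k$ neighbours of $v$ at most $k$ are controlled by a cop (at most one per cop, by the displayed fact, using that no cop sits on $v$), so some neighbour $w$ has $N[w]$ cop-free; the robber moves to $w$, and then no cop can reach $w$ on the following turn. In either case the robber is not captured and the invariant is restored, so the robber evades capture forever and hence $c(G)\ge\delta(G)$.

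The case analysis establishing the invariant, together with the bookkeeping on move order within a round (robber moves, then cops move), is routine. The one genuinely delicate point is the opening move, which is why I isolate the bound $\gamma(G)\ge\delta(G)$: it is exactly the assertion that $k<\delta(G)$ cops cannot be placed so as to threaten every potential starting vertex, and it uses the girth hypothesis in the same way the main strategy does. Everything else is a direct translation of ``girth at least $5$'' into the ``one cop, at most one neighbour'' principle.
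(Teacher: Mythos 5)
Your proof is correct and is essentially the classical Aigner--Fromme argument: the girth hypothesis is distilled into the fact that $\left| N(v)\cap N[c]\right| \le 1$ for every $c\ne v$, so each cop threatens at most one neighbour of the robber's current vertex, and the auxiliary bound $\gamma(G)\ge \delta(G)$ correctly disposes of the initial placement. The paper states this theorem without proof, citing \cite{af}, so there is no in-paper argument to compare against; your write-up supplies the standard proof from that reference and I see no gaps in it.
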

The following theorem extends this result to the cc-number.
\begin{theorem}\label{girth}
If  $G$ has girth at least 5, then $$cc(G) \geq \delta (G)+1.$$
\end{theorem}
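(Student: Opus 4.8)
The plan is to show that the robber can evade capture forever whenever at most $\delta = \delta(G)$ cops are present, which gives $cc(G)\geq\delta+1$. The robber will maintain the invariant that at the start of each of his turns he stands on a cop-free vertex $v$ (recall $\deg(v)\geq\delta$ for every vertex). On his turn he steps to a carefully chosen $u\in N[v]$ — possibly onto a vertex occupied by a lone cop, whom he then deletes — in such a way that no cop can reach $u$ on the cops' reply and the invariant is restored.

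The heart of the matter is the claim: if the robber is on a cop-free vertex $v$ and there are at most $\delta$ cops, then $N[v]$ contains a \emph{safe} vertex $u$, meaning one that carries at most one cop and has no cop in $N(u)$; call every other vertex of $N[v]$ \emph{dangerous}. The girth hypothesis is used here through the facts that in a graph of girth at least $5$ two adjacent vertices have no common neighbour and two vertices at distance $2$ have exactly one common neighbour. Let $a$ and $b$ be the numbers of cops presently on $N(v)$ and on $N_2(v)$; a cop at distance at least $3$ from $v$ cannot make any vertex of $N[v]$ dangerous, and there is no cop on $v$. Then $v$ is dangerous only if $a\geq 1$; at most $\floor{a/2}$ neighbours of $v$ carry two or more cops; and a neighbour $x$ acquires a cop in $N(x)$ only via a cop of $N_2(v)$ adjacent to $x$ (a cop at another neighbour of $v$ would give a triangle), while each cop of $N_2(v)$ is adjacent to at most one neighbour of $v$ (two would give a $4$-cycle). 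Hence the number of dangerous vertices in $N[v]$ is at most $\floor{a/2}+b$, plus one more if $a\geq 1$, which is at most $a+b$, hence at most the total number of cops, hence at most $\delta<\delta+1\leq|N[v]|$; so a safe $u$ exists. The extra cop over the classical bound $c(G)\geq\delta(G)$ comes precisely from a neighbour carrying a single cop not being dangerous: the robber may march onto it and remove that cop.

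Given the claim the strategy runs itself: the robber moves to a safe $u$, deleting the lone cop there if present. Immediately after this move $N[u]$ contains no cop — the cops did not move during the robber's turn, $N(u)$ was cop-free, and any cop that sat on $u$ is gone — so the cops cannot step onto $u$ on their reply; the robber survives the round and opens his next turn on the cop-free vertex $u$, restoring the invariant. Thus he is never captured.

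The step I expect to be the real obstacle is the opening. For the strategy to start, the robber must be placed on a vertex whose closed neighbourhood is free of cops, or a cop captures him on the cops' first move; a little extra care is also needed for his very first move, since the invariant is not yet in force and the rules forbid him from opening by moving onto a cop. The clean remedy is to start him at distance at least $3$ from every cop, so that $N(v)$ stays cop-free through the cops' first move and the claim then hands him a cop-free safe target; such a starting vertex exists provided $\delta$ cops cannot cover every vertex within distance $2$ of themselves. This is where the girth hypothesis must do more work — one should show that a graph of girth at least $5$ has no distance-$2$ dominating set (hence no dominating set) of size at most $\delta$ — and it is precisely here that very small cases such as $C_5$ and $C_6$, where $\gamma(G)=\delta(G)$, must be set aside. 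Apart from the opening, the only delicate point of the claim is its tight instance, where $\deg(v)=\delta$ and the cops are packed as efficiently as possible around $v$; the girth-$5$ intersection facts are exactly what keep the count strictly below $|N[v]|$ there.
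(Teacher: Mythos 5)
Your maintenance argument is correct, and it is exactly the ``minor variation'' of the Aigner--Fromme proof of Theorem~\ref{girth0} that the paper has in mind when it omits the proof: with the robber on a cop-free vertex $v$, each cop spoils at most one vertex of $N[v]$ (a single cop standing on a neighbour spoils nothing, since the robber can attack her, and that is where the extra $+1$ over Theorem~\ref{girth0} comes from), so $\delta(G)$ cops leave a safe vertex in $N[v]$, and after the robber moves there his new closed neighbourhood is cop-free and the invariant propagates. Your counting via $a$, $b$, and the girth-$5$ intersection facts is airtight.

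The gap is the opening, and you have correctly located it --- but your proposed repair cannot work, and in fact no repair exists because the statement as printed is false. Starting the robber at distance at least $3$ from every cop is impossible in any graph of diameter $2$, and girth-$5$ graphs of diameter $2$ exist (the Petersen graph, the Hoffman--Singleton graph). What you actually need is weaker: an initial vertex $v_0$ with no cop in $N[v_0]$, since then the cops' first move cannot place a cop on $v_0$ and your claim takes over from there; this only requires $\gamma(G)\ge\delta(G)+1$. But that inequality genuinely fails for girth-$5$ graphs: $\gamma(G)=\delta(G)$ for $C_5$, $C_6$, and the Petersen graph, and these are real counterexamples to Theorem~\ref{girth}, not just to the proof. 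By Theorem~\ref{first} we have $cc(G)\le\gamma(G)$, so $cc$ of the Petersen graph is $3=\delta$ (three cops on a dominating set win on their first move), and the paper's own formula gives $cc(C_5)=cc(C_6)=2=\delta$, each contradicting the claimed bound $cc(G)\ge\delta(G)+1$. The standard girth-$5$ counting (distinct vertices outside $N[v]$ dominate distinct neighbours of $v$) yields only $\gamma(G)\ge\delta(G)$, which is tight, so the needed inequality cannot be derived. The honest conclusion is that the theorem should be stated as $cc(G)\ge\min\{\delta(G)+1,\gamma(G)\}$, or with the added hypothesis $\gamma(G)>\delta(G)$; under that hypothesis your argument, with the weakened opening above, is a complete proof.
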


Isometric paths play an important role in several key theorems in the game of Cops and Robbers, such as the cop number of planar graphs (see Chapter 4 of \cite{bonato}). We call a path $P$ in a graph $G$ \emph{isometric} if the shortest distance between any two vertices is equal in the graph induced by $P$ and in $G$. For a fixed integer $k \geq 1$, an induced subgraph $H$ of $G$ is $k$-$guardable$ if, after finitely many moves, $k$ cops can move only in the vertices of $H$ in such a way that if the robber moves into $H$ at round $t$, then he will be captured at round $t + 1$ by a cop in $H.$ For example, a clique in a graph is 1-guardable.

Aigner and Fromme \cite{af} proved the following result.

\begin{theorem}\cite{af}\label{path0}
An isometric path is 1-guardable.
\end{theorem}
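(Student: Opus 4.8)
The plan is to build an explicit one-cop strategy from a \emph{projection} of the whole vertex set onto $P$. Write $P\colon v_0,v_1,\dots,v_m$ and, for $u\in V(G)$, set $a(u)=d(v_0,u)$ and $b(u)=d(v_m,u)$; since $P$ is isometric, $d(v_0,v_m)=m$ and, for each $j$, $d(v_0,v_j)=j$ and $d(v_m,v_j)=m-j$. Define
$$\phi(u)=v_i,\qquad i=\left\lfloor\frac{a(u)-b(u)+m}{2}\right\rfloor .$$
The first thing to check is that this is well posed: the triangle inequality gives $|a(u)-b(u)|\le m$, so $0\le i\le m$ and $\phi(u)\in V(P)$, and a one-line substitution gives $\phi(v_j)=v_j$, so $\phi$ fixes $P$ pointwise.

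The crux is the claim that $\phi$ preserves adjacency in the weak sense pursuit needs: if $u,w\in V(G)$ are equal or adjacent in $G$, then $\phi(u),\phi(w)$ are equal or adjacent in $P$. If $u\sim w$ then $|a(u)-a(w)|\le 1$ and $|b(u)-b(w)|\le 1$, so $a(u)-b(u)$ and $a(w)-b(w)$ differ by at most $2$; halving, the two reals whose floors define the indices of $\phi(u)$ and $\phi(w)$ differ by at most $1$, and floors of reals at distance at most $1$ differ by at most $1$. I expect this rounding step to be the subtle point: the factor $\tfrac12$ cannot be dropped (reals at distance $2$ may have floors differing by $2$), which is also why the naive ``nearest vertex of $P$'' map fails --- it can send an edge of $G$ to two vertices of $P$ at distance $2$ --- and it is precisely here that the isometry of $P$, via $d(v_0,v_m)=m$, enters.

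Granting the claim, the rest is routine bookkeeping. The cop starts on $P$, say at $v_0$, and each round takes one step along $P$ toward $\phi(r)$, where $r$ is the robber's current vertex; since $\phi$ is valued in $V(P)$, the cop never leaves $P$. Let $\Phi$ be the distance along $P$ from the cop to $\phi(r)$. By the claim a robber move shifts $\phi(r)$ by at most one step, from which one checks that $\Phi$ is non-increasing and cannot stay at a value $\ge 2$ for long --- remaining there would force $\phi(r)$ to drift monotonically away from the cop, impossible on the finite path $P$ --- so after finitely many rounds $\Phi\le 1$ permanently, and in fact the cop then sits on $\phi$ applied to the robber's previous vertex. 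From that point on, if the robber steps onto some $v_i\in V(P)$ in round $t$, then immediately before that step the cop is on $\phi$ of the robber's previous vertex, which (being equal or adjacent to $v_i$) lies within distance $1$ of $\phi(v_i)=v_i$; hence in round $t+1$ the cop steps onto $v_i$ and captures. This recovers the argument of Aigner and Fromme~\cite{af}.
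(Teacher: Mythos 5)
Your proof is correct, and it is essentially the classical Aigner--Fromme argument that the paper itself invokes by citation (Theorem~\ref{path0} is stated without proof, with a reference to \cite{af}): the projection $\phi$ defined from the distances to the two endpoints is exactly the standard ``shadow'' retraction onto an isometric path, and your catch-up argument via the non-increasing potential $\Phi$ is the standard way to show the cop synchronizes with the shadow in finitely many rounds, as the guardability definition permits. All the delicate points --- well-definedness of the index via $d(v_0,v_m)=m$, the fact that $\phi$ sends edges to edges or fixed points, and the order of moves when the robber steps onto $P$ --- are handled correctly.
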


We have an analogue of Theorem~\ref{path0} for the cc-number.

\begin{theorem}\label{path}
An isometric path is 2-guardable in the game of Cops and Attacking Robbers, but need not be 1-guardable.
\end{theorem}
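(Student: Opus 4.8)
For the first assertion I would run the single-cop guarding strategy of Theorem~\ref{path0} with two cops playing it in common, so that at every round they occupy the same vertex $v_j$ of $P$ (the ``shadow'' of the robber). While the robber does not attack, the game is exactly the classical one, and Theorem~\ref{path0} guarantees that if the robber moves onto $P$ at round $t$ he is captured at round $t+1$ (by either cop). The only genuinely new option for the attacking robber is to attack a cop; but the two cops share the vertex $v_j$, which (after the finite set-up phase) lies on $P$, so an attack is a move onto $v_j$, i.e., a move into $P$. Such a move removes one of the two cops and leaves the other on $v_j$, the robber's current vertex, so the robber is captured at once and the game ends. Hence $P$ is $2$-guardable. (The set-up phase is carried out as in the classical proof, the two cops making identical moves; an attack during set-up is harmless for the same reason.)

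For the second assertion I would exhibit a concrete graph. For $N$ sufficiently large (say $N\ge 8$), let $G$ be the cycle $x_0x_1\cdots x_{N-1}$ together with one extra vertex $w$ adjacent to $x_0$, $x_1$ and $x_3$, and let $P$ be the induced path on $x_{N-1},x_0,x_1,x_2$. Then $P$ is isometric in $G$: it is an isometric subpath of the cycle, and $w$, the only vertex outside $P$ that touches $P$, is joined to the \emph{consecutive} pair $x_0,x_1$ and hence creates no shortcut between vertices of $P$. Also $G$ has no dominated vertex, so $c(G)\ge 2$ and therefore $cc(G)\ge 2$ by Theorem~\ref{first}; this rules out the degenerate possibility of a lone cop ``guarding'' $P$ simply by winning the whole game during its set-up. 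Now fix any one-cop strategy that, after finitely many rounds, confines the cop to $P$. A confined cop never leaves $P$, and $V(G)\setminus V(P)=\{x_3,\dots,x_{N-2}\}\cup\{w\}$ induces a connected graph containing $w$, so the robber can roam outside $P$ safely and, once the cop is confined, park on $w$. From $w$ the robber can enter $P$ only at $x_0$ or at $x_1$. Before the robber moves, the cop stands on some vertex $x_j$ of $P$; to be able to capture the robber should he next step into $P$ at $x_0$ the cop needs $x_j$ to equal or be adjacent in $P$ to $x_0$, and likewise for $x_1$, which forces $x_j\in\{x_0,x_1\}$. But $w$ is adjacent to both $x_0$ and $x_1$, so if $x_j\in\{x_0,x_1\}$ the robber moves onto $x_j$, eliminating the cop while entering $P$; and if instead $x_j\in\{x_{N-1},x_2\}$, then one of $x_0,x_1$ is neither $x_j$ nor adjacent to $x_j$ in $P$, and the robber enters $P$ there and is not captured at the next round. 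In every case the guarding property fails, so $P$ is not $1$-guardable.

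The first part is bookkeeping on top of Theorem~\ref{path0}; the real difficulty is the second, which rests on a genuine tension in the construction. Any vertex outside an isometric path that is adjacent to two of its vertices at distance $\ge 3$ would collapse the isometry, so the strongest ``attacking gadget'' one may attach to $P$ reaches only two consecutive vertices of $P$ — and the crux of the argument is precisely that these two consecutive vertices already deprive a single cop of every safe guarding position. The remainder is routine but must be done with care: checking that $P$ is genuinely isometric in $G$, that $cc(G)\ge 2$ so the guarding question is non-vacuous, and that the robber can both outlast the cops' set-up phase and be sitting on $w$ at the moment the cop commits to $P$.
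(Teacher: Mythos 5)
Your proposal is correct and follows exactly the route the paper intends: the paper omits this proof as a ``minor variation'' (offering only Figure~\ref{fig1} for the negative half), and your argument supplies the standard details --- two co-located cops running the Aigner--Fromme strategy, so that any attack leaves the second cop on the robber's vertex, plus a gadget vertex adjacent to two consecutive vertices of an isometric path, which deprives a lone confined cop of any position that is simultaneously safe from attack and within reach of both entry points. Your verifications that the path remains isometric, that $c(G)\ge 2$, and that the robber can reach $w$ outside $P$ are the right boxes to check, and they all hold.
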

See Figure~\ref{fig1} for an example where the robber can freely move onto an isometric path without being captured by a sole cop.
\begin{figure}[h!]
\begin{center}
\epsfig{figure=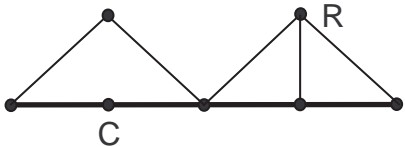}\caption{One cop cannot guard the isometric path (depicted in bold). We assume that the robber has just arrived at their vertex and it is the cop's turn to move.}
\label{fig1}
\end{center}
\end{figure}

A graph $G$ is called \emph{planar} if it can be embedded in a plane without two of its edges crossing. It was shown first in \cite{af} that planar graphs require at most three cops to catch the robber; see \cite{bonato} for an alternative proof of this fact. Given the results above, we may conjecture that the cc-number of a planar graph is at most 4 or even 5, but either bound remains unproven.

\emph{Outerplanar} graphs are those that can be embedded in the plane without crossings in such a way that all of the vertices belong to the unbounded face of the embedding. Clarke proved the following theorem in her doctoral thesis.
\begin{theorem}\cite{nc}\label{outer0}
If $G$ is outerplanar, then $c(G) \leq 2$.
\end{theorem}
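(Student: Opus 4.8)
The plan is to reduce to the $2$-connected case and then describe a two-cop strategy that steadily confines the robber to a smaller and smaller piece of the graph. It is well known (see, e.g., \cite{bonato}) that the cop number of a connected graph equals the maximum cop number over its blocks, so we may assume $G$ is $2$-connected. A $2$-connected outerplanar graph has a unique Hamiltonian cycle $C$ bounding its outer face, and every remaining edge is a chord of $C$; because $G$ is outerplanar these chords are pairwise non-crossing, so the inner faces of $G$, linked by the relation ``share a chord,'' form a tree $T$ (the weak dual of $G$). Each chord $e = xy$ of $C$ divides the disk bounded by $C$ into two parts, giving two \emph{subregions} of $G$; each subregion is again a $2$-connected outerplanar graph whose outer cycle is $e$ together with an arc of $C$, and the subregions on the two sides of a chord $e$ correspond to the two subtrees obtained by deleting the corresponding edge of $T$.

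The two cops will maintain the invariant that the robber is confined to a subregion $R$ cut off by a chord (or outer edge) $e = xy$, with one cop on $x$ and one cop on $y$; since a cop never permits the robber to reach $x$ or $y$, the robber is genuinely trapped in $R$. The cops set this up at the start by walking to the two endpoints of an arbitrary chord and letting the robber pick a side. To make progress, let $f$ be the inner face incident with $e$ on the $R$-side, and write its boundary as $e$ together with the path $x = u_0, u_1, \dots, u_k = y$, whose vertices occur in this cyclic order along $C$. Each edge $u_a u_{a+1}$ that is itself a chord cuts off a smaller subregion $R_a \subseteq R$; together with the face $f$ these pieces decompose $R$. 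The cops observe which of these pieces contains the robber. If the robber sits in some $R_a$, the cops move to the endpoints $u_a, u_{a+1}$ of the chord bounding $R_a$, reinstating the invariant with the strictly smaller subregion $R_a$; if the robber is on $f$ itself, the cops close in on him along the path $u_0, \dots, u_k$, again confining him to an ever-smaller piece and eventually capturing him. Since $\size{V(R)}$ strictly decreases with each re-establishment of the invariant, the robber is caught after finitely many rounds, giving $c(G) \le 2$.

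The delicate point --- and the step I expect to be the real obstacle --- is executing the transition: moving the two cops from $\{x, y\}$ to $\{u_a, u_{a+1}\}$ (a pair of vertices further inside $R$) without letting the robber slip out of $R$ while a cop is in transit. Two things can go wrong: the robber escaping through an endpoint a cop has just vacated, or the robber ducking into a subregion that the advancing cop has already passed. I would handle the first by never vacating an endpoint until the partner cop reaches a vertex adjacent to it, which is always possible since consecutive vertices of $\partial f$ are adjacent, so the endpoint stays effectively covered. For the second, the advancing cop should always move \emph{toward} the piece currently holding the robber, so that when the cop passes a chord the robber is already separated from the corresponding subregion by the present cop pair; here the non-crossing property of the chords is exactly what ensures that ``passed'' subregions stay behind a separating pair. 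A short case analysis --- according to whether the robber lies on $f$ or in a subregion, and on which side of the advancing cop --- then shows the invariant survives each round and that the robber's reachable territory strictly shrinks. (One could instead derive $c(G) \le 2$ from the facts that outerplanar graphs have treewidth at most $2$ and that graphs of treewidth $w$ have cop number at most roughly $w/2$; note, though, that the tempting shortcut of parking one cop permanently on a single vertex does not suffice here, because $G$ minus one vertex need not be cop-win.)
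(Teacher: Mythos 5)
The paper does not actually prove this statement: Theorem~\ref{outer0} is imported from Clarke's thesis \cite{nc} and used as a black box, so there is no in-paper argument to compare yours against. Your proof is the standard direct argument for this result and is, in my judgement, sound: the reduction to blocks is legitimate (the cop number of a connected graph is the maximum of the cop numbers of its blocks), a $2$-connected outerplanar graph is a Hamiltonian outer cycle plus pairwise non-crossing chords, and two cops shrink the robber's territory by advancing along the boundary of the inner face incident with the current confining chord. The one thing to tighten is the handling of the ``delicate point,'' which you correctly identify as the heart of the matter but leave as a sketch. Your first repair rule (``never vacate an endpoint until the partner cop reaches a vertex adjacent to it'') is not the right condition --- taken literally it is vacuous at the start and prohibitively restrictive once the cops have separated along $\partial f$ --- and it is also unnecessary. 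The single rule that makes everything work is your second one, applied uniformly: a cop at $u_a$ may step to $u_{a+1}$ only when the robber is not currently in the subregion $R_a$ cut off by $u_au_{a+1}$ (when that edge is a chord). This suffices because, by the non-crossing property, for \emph{any} two positions $u_i,u_j$ on the boundary path of $f$ the pair $\{u_i,u_j\}$ separates the arc of the outer cycle strictly between them from everything else --- including the vacated endpoints $x,y$, the exterior of $R$, and every subregion already passed --- so a robber caught between the cops can never reach territory behind them. With that rule, in every round at least one cop can advance toward the robber: the only configuration in which both are blocked is when they already occupy the two ends of the chord cutting off the robber's subregion, which is precisely the re-established invariant with a strictly smaller region, and each advance strictly shrinks the robber's reachable set. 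So the case analysis you defer does close, and the argument is complete once written out.
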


The counterpart to Theorem~\ref{outer0} is the following.

\begin{theorem}\label{outer}
If $G$ is outerplanar, then $cc(G) \leq 3$.
\end{theorem}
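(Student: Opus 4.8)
The plan is to follow, with one extra cop for insurance, the standard strategy behind Theorem~\ref{outer0}: three cops will maintain a separator that confines the robber to an ever-shrinking region, and the extra cop guarantees that no lone cop is ever worth attacking.

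First I would reduce to the case that $G$ is $2$-connected. Every block of an outerplanar graph is outerplanar, and a block-by-block argument (taking some care at the cut vertices because of the attack rule, where the spare cop can keep the ``door'' backed up) lets the cops confine the robber to a single block. So assume $G$ is $2$-connected outerplanar. Then its outer face is a Hamilton cycle $C$, every other edge is a chord, the chords are pairwise non-crossing, and the weak dual $T$ (nodes $=$ internal faces, edges $=$ chords) is a tree. Note that for any chord $xy$ the pair $\{x,y\}$ separates $G$ (an edge joining the two open arcs of $C$ cut off by $xy$ would have to cross $xy$ in the embedding), and each side is a $2$-connected outerplanar graph corresponding to a subtree of $T$.

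The invariant I would maintain is: the robber is confined to a region $G_R$ with a ``gate'' chord $xy$, with two cops on $x$ and $y$ and the third kept one step back. Since $xy\in E(G)$, a robber who steps onto $x$ or $y$ is captured on the next cop move (by the other gate cop, or the reserve), so he cannot profitably attack a gate cop; hence $G_R$ is genuinely a trap. The cops set this up at the start by walking to the endpoints of any chord (if $G$ is chordless then $G=C_n$ and $cc(G)\le 3$ is already known). The advancing step shrinks $G_R$: the face $f$ of $G$ inside $G_R$ incident to $xy$ is a polygon $x=u_0,u_1,\dots,u_k=y$, and $P=u_0u_1\cdots u_k$ is isometric in $G_R$ (a shortcut chord among the $u_i$ would pass through the interior of the face $f$, and a shortcut through a side region only wastes moves). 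By Theorem~\ref{path} two cops can guard $P$ relative to $G_R$; once $P$ is guarded the robber is pinned inside one of the side regions $R_i$ hanging off a chord $u_iu_{i+1}$. As $R_i$ is again bounded by a single chord, the cops re-establish the invariant with gate $u_iu_{i+1}$ and region $R_i$, a proper subtree of $T$. Iterating, $G_R$ shrinks to a single face, i.e.\ the robber is confined to a cycle, where the cops win because $cc(C_m)\le 3$.

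The main obstacle is the transitions between consecutive configurations of the invariant. Passing from ``gate $xy$ plus reserve'' to ``two cops guarding $P$'' to ``gate $u_iu_{i+1}$ plus reserve'' creates intervals in which a single cop is all that separates the robber from leaving his region, and in the attacking game such a cop can be eliminated. The remedy -- and the technical heart of the proof -- is to schedule the cop moves so that the at most two vertices the robber could use to escape (the relevant endpoints of $P$, and later $u_i$ and $u_{i+1}$, which are adjacent since $u_iu_{i+1}$ is a chord) are at every moment either doubly occupied or occupied by a cop with an adjacent cop, using the spare cop precisely to supply this backup while another cop relocates. Verifying that three cops always suffice for this bookkeeping, and disposing of the base case where $G_R$ is a single face so that the ``pushing'' cop is never fatally attacked, is where the real work lies.
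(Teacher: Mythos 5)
The paper never writes out a proof of Theorem~\ref{outer}: it appears in the ``basic results'' section, whose proofs are declared to be ``minor variations of the analogous proofs for the cop number'' and are omitted. Your plan --- run the standard confinement strategy behind Theorem~\ref{outer0} on a $2$-connected outerplanar block, with a third cop serving as permanent back-up so that no cop is ever profitably attacked --- is surely the intended variation, and the high-level architecture (gate chord, face incident to the gate, side regions indexed by a subtree of the weak dual) is sound.

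However, two points in your write-up are genuine gaps rather than routine verification. First, the claim that $P=u_0u_1\cdots u_k$ is isometric in $G_R$ is false as stated: the gate chord $xy=u_0u_k$ is itself an edge of $G_R$, so $d_{G_R}(u_1,u_{k-1})\le 3$ via $u_1,x,y,u_{k-1}$, while the distance along $P$ is $k-2$; for a face with six or more sides $P$ is not isometric. What is true is that $P$ is isometric in $G_R$ with the edge $xy$ deleted (or with $x,y$ deleted), and the robber never traverses $xy$ because its endpoints are guarded --- but then you cannot invoke Theorem~\ref{path} off the shelf, since it is stated for paths isometric in the graph the robber plays on; you would need a relative version of the guarding lemma and a proof that it survives the attack rule. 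Second, you explicitly defer ``the technical heart'': the scheduling that keeps every exposed cop doubly covered while the configuration changes from ``gate $xy$'' to ``two cops guarding $P$'' to ``gate $u_iu_{i+1}$.'' This is precisely the step where the attacking game differs from the classical one --- during the set-up phase of guarding $P$, both path-guards may need to travel to the robber's shadow in the interior of $P$, momentarily vacating $x$ and $y$, and you must rule out the robber re-crossing the old separator or eliminating the lone spare during that window. Naming the obstacle is not the same as overcoming it, so as it stands the proposal is an accurate plan for a proof rather than a proof.
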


Meyniel's conjecture---first communicated by Frankl~\cite{f}---is one of the most important open problems surrounding the game of Cops and Robbers. The conjecture states that $c(n) = O(\sqrt{n})$,  where $c(n)$ is the maximum of $c(G)$ over all $n$-vertex, connected graphs. Cops and Robbers has been studied extensively for random graphs (see for example, \cite{bkl, bpw, lp, p}), partly owing to a search for counterexamples to Meyniel's conjecture. However, it was recently shown that Meyniel's conjecture holds asymptotically almost surely (that is, with probability tending to $1$ as the number of vertices tends to infinity) for both binomial random graphs $G(n,p)$~\cite{pw1} as well as random $d$-regular graphs~\cite{pw2}.

In \cite{bpw} it was shown that for dense random graphs, where $p=n^{-o(1)}$ and $p<1-\epsilon$ for some $\epsilon>0$, asymptotically almost surely we have that
\begin{equation}\label{rand}
c(G(n,p)) = (1+o(1)) \gamma(G(n,p)) = (1+o(1)) \log_{1/(1-p)} n,
\end{equation}
Note that (\ref{rand}) implies that $c(G(n,p)) = (1+o(1)) cc(G(n,p))$ for the stated range of $p$; in particular, applying (\ref{rand}) to the $p=1/2$ case (which corresponds to the uniform probability space of all labelled graphs on $n$ vertices), we have that for every $\epsilon > 0$, almost all graphs satisfy $cc(G)/c(G) \in [1,1+\epsilon]$. Unfortunately, the asymptotic value of the cop number is not known for sparser graphs. However, it may be provable that $c(G(n,p)) = (1+o(1)) cc(G(n,p))$ for sparse graphs, without finding an asymptotic value.

We finish the section by noting that graphs with $cc(G)=1$ are precisely those with a universal vertex. However, characterizing those graphs $G$ with $cc(G)=2$ is an open problem. Graphs with $cc(G)=2$ include cop-win graphs without universal vertices, and graphs which are not cop-win but have domination number 2.  Before the reader conjectures this gives a characterization, note that the graph in Figure~\ref{eg} with cc-number equaling 2 is in neither class.
\begin{figure}[h!]
\begin{center}
\epsfig{figure=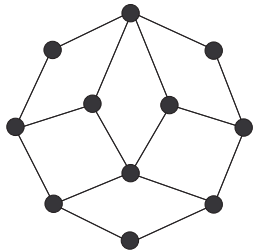,height=1.5in,width=1.5in}\caption{A graph $G$ with $c(G)=cc(G)=2$ and $\gamma(G)=3$.}
\label{eg}
\end{center}
\end{figure}

\section{How large can the cc-number be?}\label{duo}

One of the main unanswered questions on the game of Cops and Attacking Robbers is how large the cc-number can be relative to the cop number. Many of the results from the last section might lead one to (mistakenly) conjecture that $$cc(G) \le c(G) +1$$ for all graphs, and this was the thinking of the authors and others for some time. We provide a counterexample below.

By Theorem~\ref{first}, we know that $cc(G)$ is bounded above by $2c(G).$ For example, this is a tight bound for a path of length at least $3$. However, we do not know an improved bound which applies to general graphs, nor do we possess graphs $G$ with $c(G)>2$ whose cc-number equals $2c(G)$. In this section, we outline one approach which may ultimately yield such examples. Improved bounds for several graph classes are outlined in the next two sections.

Our construction utilizes line graphs of hypergraphs. For a positive integer $k$, a $k$-\emph{uniform hypergraph} has every hyperedge of cardinality $k$. A hypergraph is \emph{linear} if any two hyperedges intersect in at most one vertex. The \emph{line graph} of a hypergraph $H$, written $L(H)$, has one vertex for each hyperedge of $H,$ with two vertices adjacent if the corresponding hyperedges intersect.

\begin{lemma}\label{lem:hyper_lower}
Let $H$ be a linear $k$-uniform hypergraph with minimum degree at least 3 and girth at least 5.  If $L(H)$ has domination number at least $2k$, then $cc(L(H)) \ge 2k$.
\end{lemma}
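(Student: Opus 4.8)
The plan is to give the robber a strategy that defeats any $2k-1$ cops, which yields $cc(L(H)) \ge 2k$. Throughout, identify each vertex of $L(H)$ with the corresponding hyperedge of $H$, and for a vertex $v$ of $H$ let $Q_v$ be the set of hyperedges through $v$; since $H$ is linear, each $Q_v$ is a clique of $L(H)$, every edge of $L(H)$ lies in exactly one such clique, and two of these cliques meet in at most one vertex of $L(H)$.

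First I would work out the local structure of $L(H)$ around a fixed vertex $f$ (a hyperedge of $H$). Writing $C_v = Q_v \setminus \{f\}$ for $v \in f$, one checks that $N(f)$ is the disjoint union $\bigcup_{v \in f} C_v$ of $k$ cliques with $|C_v| = d_H(v) - 1 \ge 2$ (this is where the minimum-degree hypothesis enters), and that $L(H)$ has no edge between $C_v$ and $C_w$ for $v \ne w$, since such an edge would produce a Berge triangle in $H$. The two structural facts that carry the argument, both consequences of girth at least $5$, are: (i) a vertex of $L(H)$ at distance exactly $2$ from $f$ is adjacent to vertices of $C_v$ for at most one $v \in f$ — otherwise, together with $f$ and two of its neighbours in distinct $C_v$'s, it forms a Berge $4$-cycle; and (ii) such a vertex is adjacent to at most one vertex of that $C_v$ — otherwise it forms, with two vertices of $C_v$, a triangle of $L(H)$ contained in no $Q_u$, i.e. a Berge triangle.

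The robber maintains the invariant that at the start of every cops' turn he occupies a vertex $f$ dominated by no cop (no cop lies in $N[f]$). This holds at the outset, since the cops' $2k-1$ starting positions cannot dominate $L(H)$ because $2k-1 < 2k \le \gamma(L(H))$. For the inductive step, suppose the invariant holds and the cops move. No cop could reach $f$, so $f$ carries no cop, and only cops now at distance $1$ or $2$ from $f$ can threaten any vertex of $N[f]$. For $v \in f$ let $b_v$ count the cops in $C_v$ and $d_v$ count the cops at distance $2$ from $f$ that are adjacent to $C_v$; by disjointness of the $C_v$'s and by facts (i) and (ii), the cops tallied by the $b_v$'s and $d_v$'s are pairwise distinct, so $\sum_{v \in f}(b_v + d_v) \le 2k-1$. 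Since $|f| = k$, some $v \in f$ has $b_v + d_v \le 1$, and the robber escapes through this $C_v$: if $b_v = 0$ he moves to a vertex of $C_v$ not adjacent to the (at most one) distance-$2$ cop counted by $d_v$, which exists since $|C_v| \ge 2$; if $b_v = 1$ (so $d_v = 0$) he moves onto the unique cop in $C_v$ and eliminates it. In both cases, facts (i) and (ii) — together with the observation that a cop at distance $\ge 3$ from $f$ is at distance $\ge 2$ from every vertex of $N[f]$ — show the robber's new vertex is again undominated by the surviving cops, so the invariant persists. (If no cop is adjacent to $f$ he may instead simply stay.) Hence the robber is never captured and $2k-1$ cops do not suffice.

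The main obstacle, I expect, is establishing facts (i) and (ii): the assertion that each of the $k$ \emph{directions} $C_v$ at the robber's current vertex can be blocked only at a cost of at least two cops is exactly what pins the bound at $2k$, and it rests entirely on the absence of short Berge cycles. Once that local structure is in hand, the remainder is bookkeeping; the minimum-degree hypothesis is used only for $|C_v| \ge 2$, and the domination hypothesis only to seat the robber safely before the chase begins.
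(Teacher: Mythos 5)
Your proposal is correct and follows essentially the same route as the paper's proof: decompose the neighbourhood of the robber's vertex into the $k$ disjoint cliques $C_v$, use linearity and girth at least $5$ to show their closed neighbourhoods meet only at the robber's vertex (your facts (i) and (ii)), and then pigeonhole $2k-1$ cops over $k$ sets so that some direction is covered by at most one cop, who is either absent or a lone, safely attackable cop inside the clique. The only difference is presentational --- you phrase it as an explicit robber strategy maintaining an ``undominated'' invariant, whereas the paper argues by contradiction at the moment of capture --- but the counting and case analysis are identical.
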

\begin{proof}
Suppose there are at most $2k-1$ cops. Since the domination number of $L(H)$ is at least $2k,$ the robber can choose an initial position that lets him survive the cops' first move. To show that $2k-1$ cops cannot catch the robber in the game of Cops and Attacking Robber on $L(H)$, suppose otherwise, and consider the state of the game on the robber's final turn (that is, just before he is to be captured).   Let $v$ be the robber's current vertex, $E_v$ the corresponding edge of $H$, and $w_1, w_2, \ldots, w_k$ the elements of $E_v$.  The neighbours of $v$ in $L(H)$ are precisely those vertices corresponding to edges of $H$ that intersect $E_v$; denote by $S_{w_i}$ the set of vertices (other than $v$) corresponding to edges containing $w_i$.  Each $S_{w_i}$ is a clique; moreover, since $H$ has minimum degree at least 3, each contains at least two vertices.  By hypotheses for $H$, it follows that the $S_{w_i}$ are disjoint and that no vertex outside $S_{w_i}$ dominates more than one vertex inside.  Finally, since $H$ has girth at least 5, no vertex in $G$ dominates vertices in two different $S_{w_i}$ (that is, the neighbourhoods $N[S_{w_i}]$ only have $v$ in common).

Consider the cops' current positions.  The cops must dominate all of $N[v]$, since otherwise the robber would be able to survive for one more round (by moving to an undominated vertex).  Since the $N[S_{w_i}]$ only have $v$ in common, for some $j$ we have at most one cop in $N[S_{w_j}]$.  If in fact there are no cops in $N[S_{w_j}]$, then no vertices of $S_{w_j}$ are dominated, a contradiction.  Thus, $S_{w_j}$ contains exactly one cop.  Since each vertex outside $S_{w_j}$ dominates at most one vertex inside and $S_{w_j}$ contains at least two vertices, the cop must actually stand within $S_{w_j}$.  However, since she is the only cop within $N[S_{w_j}]$, the robber may attack the cop without leaving himself open to capture on the next turn.  Thus, the robber always has a means to avoid capture on the cops' next turn.  Hence, at least $2k$ cops are needed to capture the robber, as claimed.
\end{proof}

We aim to find, for all $k$, graphs $G$ such that $c(G)=k$ and $cc(G)=2k$. This, however, remains open for all $k\ge 3.$

As an application of the lemma, take $H$ to be the Petersen graph.  It is easily verified that $c(L(H))=2$; see also \cite{pawel}. Lemma~\ref{lem:hyper_lower} with $k=2$ shows that $cc(L(H)) \ge 4$; hence, Theorem~\ref{first} then implies that $cc(L(H))=4$. See Figure~\ref{fig2} for a drawing of the line graph of the Petersen graph.
\begin{figure}[h!]
\begin{center}
\epsfig{figure=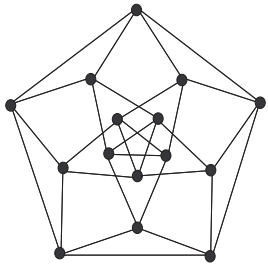,height=1.5in,width=1.5in}\caption{The line graph of the Petersen graph.}\label{fig2}
\end{center}
\end{figure}

\section{Bipartite graphs}\label{tri}

For bipartite graphs, we derive the following upper bound.

\begin{theorem}\label{bip}
For every connected bipartite graph $G$, we have that $cc(G) \le c(G)+2$.
\end{theorem}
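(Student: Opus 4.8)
The plan is to run a classical winning strategy with $c(G)$ cops and use two additional cops purely to punish attacks. Write $k = c(G)$ and fix a winning strategy $\sigma$ for $k$ cops in the ordinary game. We field $k$ \emph{pursuers}, who execute $\sigma$ against a shadow robber $\rho$ mirroring the real robber $R$, together with two \emph{escorts} whose only job is to guarantee that if $R$ ever moves onto a pursuer's vertex, then $R$ is captured on the cops' next turn. As long as $R$ never attacks, $\rho$ coincides with $R$ and $\sigma$ eventually captures $R$; so it suffices to make every attack immediately fatal for $R$.

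The bipartition $V(G) = X \cup Y$ is what makes two escorts plausible. Suppose it is the robber's turn with $R \in X$; then every cop adjacent to $R$ lies in $Y$. If such a cop $C$, standing at $u \in Y$, is attacked, $R$ moves to $u$, and any cop that can then capture $R$ at $u$ must sit in $N(u) \subseteq X$, that is, on $R$'s own side, hence on a vertex never adjacent to $R$. Two consequences: (i) a cop placed to \emph{cover} a pursuer, by standing on a neighbour of that pursuer's vertex, is automatically out of the robber's attack range, so an escort assigned to such a covering vertex can itself never be attacked; and (ii) if $R$ tries to flee to a neighbour $v$ of his vertex $r$ that is only ``guarded'' by a pursuer adjacent to $r$, that pursuer lies in $Y$ and cannot reach $v$ next turn, so the genuine guards of $N(r)$ must all lie in $X$. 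Thus the only useful attack is on a pursuer in $N(r)$ that is not already covered by another cop, and the task reduces to keeping every such pursuer covered using the two escorts.

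I would maintain the invariant that, at each robber's turn, every pursuer adjacent to $R$ has some cop on a neighbouring vertex, with the escorts supplying the covering that the pursuers cannot (pursuers in $N(r)$ all lie in the same part and so never cover one another). To argue that two escorts are enough, I would choose $\sigma$ efficiently, so that the cops herd $R$ essentially from $R$'s own side and stand adjacent to $R$ only transiently, with a single pursuer poised to capture (and at most two during a transition). One escort then shadows that threatening pursuer on a neighbouring vertex, while the second escort leapfrogs to take over coverage when the pursuer moves, since in a bipartite graph a cop adjacent to a pursuer's old vertex need not be adjacent to its new one. Finally I would check that the escorts can manoeuvre into their covering positions before the decisive phase of $\sigma$ using finitely many preparatory moves, exactly as in the definition of $k$-guardable.

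The main obstacle is precisely the claim that two escorts suffice: that the positions arising under $\sigma$ can be controlled so that the pursuers needing external coverage never outnumber what two leapfrogging escorts can handle. If one cannot force $\sigma$ to keep essentially one pursuer in contact with $R$, the fallback is to reassign the escorts to cover the robber's candidate escape vertices rather than the pursuers, and to bound the number that simultaneously require covering via the structure of $N(r)$. Either way the parity facts above — which at once keep the escorts unattackable and stop $R$ from escaping through a vertex only a wrong-side pursuer could punish — are the leverage that makes a constant number of escorts possible; promoting ``constant'' to ``exactly two'' is where the careful work lies.
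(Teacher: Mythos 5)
There is a genuine gap at the point you yourself flag: the claim that two escorts suffice to cover every pursuer adjacent to the robber. Under an arbitrary winning strategy $\sigma$ for $k$ cops, the number of cops simultaneously adjacent to the robber is not bounded by any constant independent of $k$ (indeed, to corner the robber the strategy may need to dominate all of $N[r]$ at once, and the pursuers in $N(r)$ all lie in the same part of the bipartition, so, as you correctly note, none of them covers another). You propose to fix this by choosing $\sigma$ so that ``a single pursuer is poised to capture (at most two during a transition),'' but no such normal form for cop strategies is established or known, and the fallback of covering the robber's escape vertices instead would again require guarding up to $\deg(r)$ vertices with two cops. So the reduction from ``constantly many escorts'' to ``exactly two'' is not a technicality to be checked at the end; it is the missing theorem, and the argument does not go through without it.

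The paper sidesteps this entirely by inverting your division of labour. Rather than letting the $k$ main cops execute $\sigma$ faithfully and protecting them afterwards, it has each cop $C_i$ track a \emph{shadow} $S_i$ that runs $\sigma$, with $C_i$ deliberately lagging one step behind whenever occupying the shadow's vertex would put her adjacent to the robber. Bipartiteness enters through a parity invariant: whenever $C_i \ne S_i$, the shadow and the robber are in different parts, which forces the cop and the robber into the \emph{same} part, hence never adjacent --- so no main cop is ever attackable and no bodyguards are needed at all. The two extra cops serve a different purpose in that proof: they travel as an inseparable (hence unattackable) pair that strictly decreases its distance to the robber each round, forcing the robber to keep moving, which is exactly what makes the parity invariant propagate and lets the shadows' strategy make progress. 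If you want to salvage your escort-based approach, you would need either a lemma bounding the number of simultaneously threatening pursuers under some canonical strategy, or, more realistically, to adopt the lagging-shadow idea, at which point the escorts' role collapses to the ``force the robber to move'' function above.
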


\begin{proof}
Fix a connected bipartite graph $G$.  Let $k=c(G)$; we give a strategy for $k+2$ cops to win the game of Cops and Attacking Robbers on $G$.  Label the cops $C_1, C_2, \ldots, C_k, C^*_1, C^*_2$.  Intuitively, cops $C_1, C_2, \ldots, C_k$ attempt to follow a winning strategy for the ordinary Cops and Robber game on $G$; since they must avoid being killed by the robber, they may not be able to follow this strategy exactly, but can follow it ``closely enough''.  Cops $C^*_1$ and $C^*_2$ play a different role: they occupy a common vertex throughout the game, and in each round, they simply move closer to the robber.  This has the effect of eventually forcing the robber to move on every turn.  (Since the cops move together, the robber cannot safely attack either one.)  Further, when the robber passes, the cops $C_1, C_2, \ldots, C_k, C^*_1, C^*_2$ pass. Therefore, we may suppose throughout that the robber moves to a new vertex on each turn.  

It remains to formally specify the movements of $C_1, C_2, \ldots, C_k$.  To each cop $C_i$, we associate a \emph{shadow} $S_i$.  Throughout the game the shadows follow a winning strategy for the ordinary game on $G$.  Let $C_i^{(t)}$, $S_i^{(t)}$, and $R^{(t)}$ denote the positions of $C_i$, $S_i$, and the robber, respectively, at the end of round $t$.  We maintain the following invariants for $1 \le i \le k$ and all $t$:
\begin{enumerate}
\item $S_i^{(t)} \in N[C_i^{(t)}]$ (that is, each cop remains on or adjacent to her shadow);
\item if $C_i^{(t+1)} \not = S_i^{(t+1)}$, then $S_i^{(t+1)}$ and $R^{(t)}$ belong to different partite sets of $G$;
\item $C_i^{(t+1)}$ is not adjacent to $R^{(t)}$ (that is, the robber never has the opportunity to attack any cop).
\end{enumerate}
On round $t+1$, each cop $C_i$ moves as follows:
\begin{itemize}
\item[(a)] If $C_i^{(t)} \not = S_i^{(t)}$, then $C_i$ moves to $S_i^{(t)}$;
\item[(b)] if $C_i^{(t)} = S_i^{(t)}$, and $S_i^{(t+1)}$ is not adjacent to $R^{(t)}$, then $C_i$ moves to $S_i^{(t+1)}$;
\item[(c)] otherwise, $C_i$ remains at her current vertex.
\end{itemize}
By invariant (1), this is clearly a legal strategy.

We claim that all three invariants are maintained.  Invariant (1) is straightforward to verify.  For invariant (2), first suppose that $C_i^{(t)}=S_i^{(t)}$, but $C_i^{(t+1)} \not = S_i^{(t+1)}$.  By the cops' strategy, this can happen only when $S_i^{(t+1)}$ is adjacent to $R^{(t)}$, in which case the shadow and robber belong to different partite sets, as desired.  Now suppose that $C_i^{(t)} \not = S_i^{(t)}$ and $C_i^{(t+1)} \not = S_i^{(t+1)}$.  By the cops' strategy we have $C_i^{(t+1)} = S_i^{(t)}$.  It follows that $C_i^{(t+1)} \not = C_i^{(t)}$, $S_i^{(t+1)} \not = S_i^{(t)}$, and $R^{(t-1)} \not = R^{(t)}$.  Thus, if $S_i^{(t)}$ and $R^{(t-1)}$ belong to different partite sets, then so must $S_i^{(t+1)}$ and $R^{(t)}$; that is, the invariant is maintained.  For invariant (3), if $S_i^{(t+1)}$ is adjacent to $R^{(t)}$, then we may suppose that $S_i^{(t+1)} \not = S_i^{(t)}$, since otherwise the shadow would have captured the robber in round $t+1$.  By the cops' strategy, we now have that $C_i^{(t+1)} \not = S_i^{(t+1)}$.  But now the cop and her shadow are in different partite sets by invariant (1), and the shadow and robber are in different partite sets by invariant (2), so the cop and robber are in the same partite set, contradicting adjacency of the cop and the robber.

Since the shadows follow a winning strategy, eventually some shadow $S_i$ captures the robber; that is, for some $t$, we have that either $S_i^{(t)}=R^{(t)}$ or $S_i^{(t+1)} = R^{(t)}$. In the former case, invariant (3) implies that $C_i^{(t)}\ne S_i^{(t)}$ and invariant (1) implies that $C_i$ captures the robber in round $t+1.$ Now consider that case when $S_i^{(t+1)} = R^{(t)}$.  By invariant (2), since $S_i^{(t+1)}$ is not adjacent to $R^{(t)}$, we in fact have that $C_i^{(t+1)} = S_i^{(t+1)} = R^{(t)}$, so the cops have won.
\end{proof}

\section{$K_{1,m}$-free, diameter $2$ graphs}

We provide one more result giving an upper bound on the cc-number for a set of graph classes.

\begin{theorem}\label{cf}
Let $G$ be a $K_{1,m}$-free, diameter $2$  graph, where $m\geq 3.$ Then
\[
cc(G)\leq c(G)+2m-2.
\]
\end{theorem}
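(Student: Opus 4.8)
The plan is to run a fixed optimal strategy for the ordinary game, reinforced by a small ``bodyguard'' force whose only job is to make every attack suicidal. Concretely, I would field $c(G)$ \emph{principal cops}, who simply follow a winning strategy $\sigma$ for ordinary Cops and Robbers on $G$ while ignoring the robber's attacks and all other cops, together with $m-1$ \emph{pairs}, each pair being two cops that permanently share a vertex and always make identical moves; this is $c(G)+2(m-1)=c(G)+2m-2$ cops. The invariant to maintain after each move of the cops is: writing $v$ for the robber's current vertex, every vertex $u\in N(v)$ holding exactly one cop has a second cop somewhere in $N(u)$. Granting it, no attack can succeed: if the robber moves onto a vertex $u\in N(v)$ carrying two or more cops, one survives on $u$ and the game ends at once; if $u$ carries exactly one cop, then after the attack the cop supplied by the invariant still lies in $N(u)$, hence is adjacent to the robber, and captures on the next cop turn. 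Thus should the robber ever attack, the cops win within one round, and otherwise the robber is reduced to legal ordinary moves --- empty-neighbour steps and passes --- against the principal cops; since no principal cop is ever lost, $\sigma$ remains in force and delivers a capture, which is genuine because a principal cop lands on the robber during a cop turn.

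The content is in maintaining the invariant with only $m-1$ pairs. On a cop turn the principal cops move to the positions $\sigma$ dictates (for the robber at $v$), and then the pairs deploy. Let $U$ be the set of vertices $u\in N(v)$ holding exactly one principal cop and having no principal cop in $N(u)$; crucially, $U$ depends on the principal cops alone. Two facts: (i) $U$ is independent and contained in $N(v)$, since if $u_1,u_2\in U$ were adjacent then the principal cop at $u_2$ would lie in $N(u_1)$, contradicting $u_1\in U$ --- so $\{v\}\cup U$ would induce a $K_{1,|U|}$, forcing $|U|\le m-1$ as $G$ is $K_{1,m}$-free; and (ii) since $G$ has diameter $2$, every pair lies within distance $2$ of every vertex of $U$, hence can move in one step into $N[u]$ for any assigned $u$. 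I would then assign the (at most $m-1$) vertices of $U$ injectively to the pairs, move each assigned pair into $N[u]$ for its target $u$, and leave the others in place. The invariant follows by a short check: a vertex $u\in N(v)$ with exactly one cop holds a lone principal cop and no pair, so either some principal cop already lies in $N(u)$ and we are done, or $u\in U$, whence the pair assigned to $u$ moved into $N[u]$ --- and it did not land on $u$ (that would give $u$ three cops), so it sits in $N(u)$. The same deployment establishes the invariant immediately after the robber's initial placement, which is legal since he may not attack on the opening move.

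The one genuinely delicate point --- where a careless version breaks --- is that $U$ must be a function of the principal cops' positions only. Were $U$ allowed to depend on where the pairs currently sit, a pair redeployed to cover one vertex could strip protection from another, and the maintenance argument would become circular. With $U$ pinned to the principal cops, the two structural inputs carry the proof: $K_{1,m}$-freeness caps $|U|$ at $m-1$, and diameter $2$ guarantees each pair the single-move reach it needs regardless of where it stood last turn. The remaining points --- that the extra cops never interfere with $\sigma$, that the robber passing is harmless, and that a lockstep pair cannot itself be safely attacked (the surviving cop is already on the robber's vertex) --- are routine, and I would dispatch them quickly.
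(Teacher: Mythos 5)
Your proof is correct, and while it shares the paper's overall architecture---$c(G)$ cops run an ordinary winning strategy while $2m-2$ auxiliary cops guarantee that every exposed cop adjacent to the robber has back-up, with $K_{1,m}$-freeness capping the number of simultaneously exposed cops at $m-1$ and diameter $2$ supplying the mobility the auxiliaries need---the implementation of the auxiliary force is genuinely different. The paper fixes an anchor vertex $x$, parks $m-1$ cops on $x$ permanently (which confines the robber to $N_2(x)$ and makes those cops unattackable), and lets the other $m-1$ auxiliaries ``hop'' around $N(x)$ by cycling through $x$, placing a back-up in $N(x)$ adjacent to any unprotected squad-1 cop in $N(R)$. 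You instead form $m-1$ inseparable pairs that roam freely, invoking diameter $2$ only for the observation that a pair can enter $N[u]$ for any target $u$ in a single move from anywhere; your pairs are attack-proof because they are doubled up, rather than because they are covered by the garrison on $x$. Your identification of $U$ as an independent subset of $N(v)$, hence of size at most $m-1$, corresponds to the paper's remark that pairwise non-adjacent squad-1 cops in $N(R)$ play the role of back-ups to each other; and your insistence that $U$ be a function of the principal cops' positions alone---so that redeploying a pair can never strip protection from another vertex---is exactly the right care to take and is handled less explicitly in the paper. The net effect is that you dispense entirely with the anchor vertex, the confinement of the robber to $N_2(x)$, and the hopping machinery, which makes the bookkeeping cleaner, while the paper's construction buys the (here unneeded) side benefit of restricting where the robber can stand.
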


When $m=3$, Theorem~\ref{cf} applies to claw-free graphs; see \cite{cs} for a characterization of these graphs. The cop number of diameter 2 graphs was studied in \cite{bb}.

\begin{proof}[Proof of Theorem~\ref{cf}]
A cop $C$ is \emph{back-up} to a cop $C^{\prime }$ if $C$ is in $N[C^{\prime
}].$ Note that a cop with
a back-up cannot be attacked without the robber being captured in the next
round.

Now let $c(G)=r,$ and consider $c(G)$ cops labelled $C_{1},C_{2},\ldots
,C_{r}.$\ We refer to these $r$-many cops as \emph{squad} $1.$ Label an
additional $2m-2$ cops as $\widehat{C_{i,1}}$ and $\widehat{C_{i,2}},$ where
$1\leq i\leq m-1;$ these cops form \emph{squad} $2.$ The intuition behind
the proof is that the cops in squad $2$ act as back-up for those in squad
1$,$ who play their usual strategy on $G.$ Further, the cops $\widehat{
C_{i,j}}$ are positioned in such a way that the cops $C_{k}$ need only
restrict their movements to the second neighbourhood of some fixed vertex.

More explicitly, fix a vertex $x$ of $G.$ Move squad $2$ so that they are
contained in $N[x].$ Next, position each of the cops $\widehat{C_{i,1}}$ on $
x.$ Hence, $R$ must remain in $N_{2}(x)$ or he will lose in the next round (in
particular, no squad 2 cop is ever attacked). Throughout the game we will
always maintain the property that there are $m-1$ cops on $x.$

We note that the squad 2 cops in $N(x)$ can move there essentially as if
that subgraph were a clique, and in addition, preserve the property that $m-1
$ cops remain on $x.$ To see this, if $\widehat{C_{i,2}}$ were on $y\in N(x)$
and the cops would like to move to $z\in N(x),$ then move $\widehat{C_{i,2}}$
to $x,$ and move some squad 2 cop  from $x$ to $z.$ In particular, a cop
from squad 2 can arrange things so that she is adjacent to a cop in squad 1
after at most one move.  We refer to this movement of the squad two cops as
a \emph{hop}, as the cops appear to jump from one vertex of $N(x)$ to
another (although what is really happening is that the cops are cycling
through $x).$ Note that hops maintain $m-1$ cops on $x$.

We now describe a strategy $\mathcal{S}$ for the cops, and then show that it
is winning. The cops in squad 1 play exactly as in the usual game of Cops
and Robbers; note that the squad 1 cops may leave $N_{2}(x)$ depending on
their strategy, but $R$ will never leave $N_{2}(x).$ The squad 2 cops play
as follows. Squad 2 cops do not move unless the following occurs:\ a squad 1
cop $C_{k}$ moves to a neighbour of $R,$ and $C_k$ has no back-up from a squad 1 cop.
In that case, some squad 2 cop $
\widehat{C_{i,j}}$ hops to a vertex of $N(x)$ which is adjacent to $C_{k}.$
There are a sufficient number of squad 2 cops to ensure this property, since
if $m$ (or more) squad 1 cops move to neighbours of $R,$ then some of these
cops must be adjacent to each other as $G$ is $K_{1,m}$-free (in particular,
the cops in $N(R)$ play the role of back-ups to each other).

Hence, the squad 1 cops may apply their winning strategy in the usual game
and ensure that whenever they move to a neighbour of $R$, some squad 2 cop
serves as back-up. In particular, $R$ will never attack a squad 1 cop for
the duration of the game. Thus, $\mathcal{S}$ is a winning strategy in the game of Cops and Attacking Robbers.
\end{proof}

\end{document}